\title[Second order universality at edge of Coulomb gases on the plane]{A note on the second order universality at~the~edge~of~Coulomb~gases~on~the~plane}
\author{Djalil~Chafaï} %
\address{Djalil Chafaï, Université Paris Dauphine, CEREMADE, IUF, PSL, France} %
\author{Sandrine~Péché}
\address{Sandrine~Péché, Université Paris-Diderot, LPMA, France}
\date{October 2013. Revised April 2014. Accepted in Journal of Statistical Physics.}
\numberwithin{equation}{section}
\keywords{System of particles; Coulomb gases; Extreme values; Gumbel law} 
\subjclass[2000]{82B21}
\theoremstyle{plain}
\newtheorem{theorem}{Theorem}[section]%
\newtheorem{lemma}[theorem]{Lemma}%
\newtheorem{remark}[theorem]{Remark}%
\newcommand{\dC}{\mathbb{C}}
\newcommand{\dE}{\mathbb{E}}
\newcommand{\dP}{\mathbb{P}}
\newcommand{\dR}{\mathbb{R}}
\newcommand{\cC}{\mathcal{C}}
\newcommand{\cM}{\mathcal{M}}\newcommand{\cN}{\mathcal{N}}
\newcommand{\cO}{\mathcal{O}}
\newcommand{\al}{\alpha}
\newcommand{\be}{\beta}
\newcommand{\de}{\delta}
\newcommand{\la}{\lambda}
\newcommand{\si}{\sigma}
\newcommand{\te}{\theta}
\newcommand{\R}{\mathbb{R}}
\newcommand{\veps}{\varepsilon}
\newcommand{\ABS}[1]{{{\left| #1 \right|}}} 
\newcommand{\BRA}[1]{{{\left\{#1\right\}}}} 
\newcommand{\NRM}[1]{{{\left\| #1\right\|}}} 
\newcommand{\PAR}[1]{{{\left(#1\right)}}} 
\newcommand{\SBRA}[1]{{{\left[#1\right]}}} 
\newcommand{\IND}{\mathbf{1}}
\newcommand{\OL}[1]{\overline{#1}}
\def\@MRExtract#1 #2!{#1}     
\renewcommand{\MR}[1]{
  \xdef\@MRSTRIP{\@MRExtract#1 !}%
  \href{http://www.ams.org/mathscinet-getitem?mr=\@MRSTRIP}{MR-\@MRSTRIP}}
\begin{document}

\begin{abstract}
  We consider in this note a class of two-dimensional determinantal Coulomb
  gases confined by a radial external field. As the number of particles tends
  to infinity, their empirical distribution tends to a probability measure
  supported in a centered ring of the complex plane. A quadratic confinement
  corresponds to the complex Ginibre Ensemble. In this case, it is also
  already known that the asymptotic fluctuation of the radial edge follows a
  Gumbel law. We establish in this note the universality of this edge
  behavior, beyond the quadratic case. The approach, inspired by earlier works
  of Kostlan and Rider, boils down to identities in law and to an instance of
  the Laplace method.
\end{abstract}

\maketitle


\section{Introduction}

Let us consider a gas of charged particles $\{z_1,\ldots,z_n\}$ on the complex
plane $\dC$, confined individually by the external field $Q$ and experiencing
a Coulomb pair repulsive interaction. This corresponds to the probability
distribution on $\dC^n$ with density proportional to
\begin{equation}\label{eq:cougas}
  (z_1,\ldots,z_n)\in\dC^n %
  \mapsto %
  \prod_{j=1}^ne^{-nQ(z_j)}\prod_{1\leq j<k\leq{}n}\ABS{z_j-z_k}^\be,
\end{equation}
where $\be>0$ is a fixed parameter and where $Q:\dC\to\dR$ is a fixed smooth
function. We are mostly interested in asymptotics in $n\to\infty$ of this
particles system. To ensure the integrability for large enough $n$, and
following \cite{ECP1818}, it is convenient to assume that there exists some
real number $\be'>\be$ with $\be'\geq1$ such that
$c:=\sup_{z\in\dC}\{(1+\ABS{z})^{\be'}e^{-Q(z)}\}<\infty$. Indeed, using the
inequality $\ABS{a-b}\leq(1+\ABS{a})(1+\ABS{b})$ valid for any $a,b\in\dC$, we
get
\[
\prod_{j=1}^ne^{-nQ(z_j)}\prod_{j<k}\ABS{z_j-z_k}^{\be}
\leq \prod_{j=1}^n\PAR{\PAR{1+\ABS{z_j}}^\be e^{-Q(z_j)}}^n
\leq c^{n^2}\prod_{j=1}^n\PAR{1+\ABS{z_j}}^{-n(\be'-\be)}.
\]
The factor $n$ in front of $Q$ in the density \eqref{eq:cougas} is motivated
by the remarkable formula
\[
\sum_{j=1}^nnQ(z_j)-\be\sum_{j<k}\log\ABS{z_j-z_k}
=n^2\PAR{\int\!Q(z)\,d\mu_n(z)
+\frac{\be}{2}\iint_{z\neq w}\!\!\!\!\log\frac{1}{\ABS{z-w}}\,d\mu_n(z)d\mu_n(w)}.
\]
where $\mu_n:=\frac{1}{n}\sum_{k=1}^n\de_{z_k}$ is the empirical distribution
of the particles. Indeed, following
\cite{MR1660943,MR1606719,MR2817648,ECP1818} (see \cite{cgz} for more general
models), on the space $\cM_1(\dC)$ of probability measures on $\dC$ equipped
with the topology of narrow convergence (i.e.\ the dual topology related to
bounded continuous test functions), the functional
\[
\mu\in\cM_1(\dC)\mapsto
I_Q(\mu)=\frac{1}{2}
\iint_{\dC^2}\!\PAR{Q(z)+Q(w)+\be\log\frac{1}{|z-w|}}\,d\mu(z)d\mu(w)
\]
is strictly convex, bounded from below with compact level sets, admits a
unique minimizer $\mu_Q$, and the empirical distribution
$\mu_n=\frac{1}{n}\sum_{k=1}^n\de_{z_k}$ satisfies a large deviations
principle for the weak topology at speed $n^2$ with good rate function
$I_Q-I_Q(\mu_Q)$. In particular, from the first Borel-Cantelli lemma, with
probability one, $\mu_n\to\mu_Q$ weakly as $n\to\infty$. Following \cite[Th.\
1.3]{MR1485778}, it can be shown that $\mu_Q$ has compact support when
$\lim_{\ABS{z}\to\infty}|z|^\be e^{-Q(z)}=0$.

\begin{remark}[Random matrices]
  The Coulomb gas \eqref{eq:cougas} is the spectrum of an Ensemble of random
  normal matrices obtained by conjugating with an independent Haar unitary
  matrix, see for instance \cite{MR1643533,MR2172690,MR2817648}. On the other
  hand, such a Coulomb gas also describes the spectrum of some few non normal
  random matrix ensembles. For instance, if $A$ is a random $n\times n$ matrix
  with i.i.d.\ complex Gaussian entries $\cN(0,\frac{1}{2n}I_2)$ of mean $0$
  and variance $1/n$ then its eigenvalues are a Coulomb gas \eqref{eq:cougas}
  with $\beta=2$ and $Q(z)=\ABS{z}^2$, and in this case $\mu_Q$ is the uniform
  distribution on the unit disc of $\dC$, see
  \cite{MR0173726,khoruzhenko-sommers,MR2908617}. Furthermore, if $B$ is an
  independent copy of $A$, then the spectrum of $AB^{-1}$ is a Coulomb gas
  \eqref{eq:cougas} with $\be=2$ and $Q(z)=\log(1+\ABS{z}^2)$, and in this
  case, $\mu_Q$ is not compactly supported and is actually heavy tailed, see
  \cite{MR2489167,ECP1818}. In both examples, $\be=2$ and $Q$ is radially
  symmetric.
\end{remark}

We suppose from now on that $Q$ is radially symmetric in the sense that
\[
Q(z)=V(|z|)
\]
where $V:\dR_+\to\dR$ is smooth and such that either $r\mapsto rV'(r)$ is
increasing or $V$ is convex, with $r^\be e^{-V(r)}\to0$ as $r\to\infty$. Let
$r_0$ be the smallest number such that $V'(r)>0$ for all $r>r_0$, and let
$R_0$ be the smallest solution of $R_0V'(R_0)=\be$. Then $0\leq
r_0<R_0<\infty$ and, following \cite[Th.\ 6.1]{MR1485778}, the probability
measure $\mu_Q$ is, in polar coordinates ($z=re^{i\te}$),
\[
d\mu_Q(z)=\frac{1}{2\pi\be}(rV'(r))'\IND_{[r_0,R_0]}(r)\IND_{[0,2\pi]}(\te)\,drd\te.
\]
This formula comes in fact from $dxdy=rdrd\te$ and $r\Delta
Q(z)=rV''(r)+V'(r)=(rV'(r))'$. Let $\{z_1,\ldots,z_n\}$ be the Coulomb gas
\eqref{eq:cougas} (in other words the atoms of $\mu_n$) and
\begin{equation}\label{eq:mods}
  \ABS{z}_{(1)}\geq\cdots\geq\ABS{z}_{(n)}
\end{equation}
be their reordered moduli (in other words the order statistics of the moduli).
In particular,
\[
\ABS{z}_{(1)}=\max_{1\leq k\leq n}\ABS{z_k} %
\quad\text{and}\quad %
\ABS{z}_{(n)}=\min_{1\leq k\leq n}\ABS{z_k}.
\]
Following Rider \cite{MR1986426} (see also \cite{rider-sinclair}), in the case
$\be=2$ and $V(r)=r^2$ then $\ABS{z}_{(1)}\overset{\dP}{\to}1$ as $n\to\infty$
and the asymptotic fluctuation follows a Gumbel law. The aim of this note is
to show, still for $\be=2$, the universality of this result beyond the
quadratic case on $V$. The following structural result forms the basement of
all the remaining results of this note.

\begin{theorem}[Layered structure]\label{th:layers}
  If $\beta=2$ then we have the identity in distribution
  \[
  (\ABS{z}_{(1)},\ldots,\ABS{z}_{(n)})
  \overset{d}{=}
  (R_{(1)},\ldots,R_{(n)})
  \]
  where $\ABS{z}_{(1)}\geq\cdots\geq\ABS{z}_{(n)}$ are as in \eqref{eq:mods}
  and where $R_{(1)}\geq\cdots\geq R_{(n)}$ is the order statistics of
  independent random variables $R_1,\ldots,R_n$ with $R_k$ of density
  proportional to 
  \[
  t\mapsto t^{2k-1}e^{-nV(t)}\mathbf{1}_{[0,\infty)}(t),
  \]
  for every $1\leq k\leq n$. In other words, in terms of point processes this
  means
  \[
  \sum_{k=1}^n\de_{\ABS{z_k}}
  \overset{d}{=}
  \sum_{k=1}^n\de_{R_k}.
  \]
\end{theorem}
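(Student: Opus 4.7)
The plan is to switch to polar coordinates $z_j = r_j e^{i\theta_j}$ and exploit the determinantal structure that arises when $\beta = 2$. First I would write the Vandermonde factor as a determinant,
\[
\prod_{1 \leq j < k \leq n}(z_k - z_j) = \det\PAR{z_j^{k-1}}_{1 \leq j,k \leq n},
\]
and expand $\ABS{\Delta}^2 = \Delta\,\OL{\Delta}$ as a double sum over permutations, obtaining
\[
\prod_{j<k}\ABS{z_j - z_k}^2
= \sum_{\sigma,\tau \in S_n} \mathrm{sgn}(\sigma)\mathrm{sgn}(\tau)
\prod_{j=1}^n r_j^{\sigma(j)+\tau(j)-2} e^{i(\sigma(j)-\tau(j))\theta_j}.
\]

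Next I would integrate out the angles $\theta_1, \ldots, \theta_n \in [0, 2\pi)$. By the orthogonality relation $\int_0^{2\pi} e^{im\theta}d\theta = 2\pi\, \IND_{m=0}$, only the diagonal pairs $\sigma = \tau$ survive (with $\mathrm{sgn}(\sigma)^2 = 1$), yielding
\[
\int_{[0,2\pi]^n} \prod_{j<k}\ABS{z_j-z_k}^2\, d\theta_1\cdots d\theta_n
= (2\pi)^n \sum_{\sigma \in S_n} \prod_{j=1}^n r_j^{2\sigma(j)-2}.
\]
Incorporating the Jacobian $d^2 z_j = r_j\, dr_j\, d\theta_j$ and the confining factor $e^{-nV(r_j)}$, and absorbing constants, the joint density of the (unordered, hence exchangeable) moduli $(\ABS{z_1}, \ldots, \ABS{z_n})$ on $\dR_+^n$ is proportional to
\[
\sum_{\sigma \in S_n} \prod_{j=1}^n r_j^{2\sigma(j)-1} e^{-nV(r_j)}.
\]

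The last step is to match this expression with the symmetrization of the joint density of the independent variables $R_1, \ldots, R_n$. Writing the density of $R_k$ as $c_k t^{2k-1} e^{-nV(t)}\IND_{[0,\infty)}(t)$ and applying an independent uniform random permutation $\pi$ on $S_n$, the tuple $(R_{\pi(1)}, \ldots, R_{\pi(n)})$ has joint density
\[
\frac{1}{n!}\sum_{\sigma \in S_n} \prod_{k=1}^n c_k r_{\sigma(k)}^{2k-1}e^{-nV(r_{\sigma(k)})}
= \frac{\prod_k c_k}{n!} \sum_{\tau \in S_n} \prod_{j=1}^n r_j^{2\tau(j)-1} e^{-nV(r_j)},
\]
where the rewriting uses the substitution $j = \sigma(k)$ and $\tau = \sigma^{-1}$. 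Both expressions are probability densities on $\dR_+^n$ proportional to the same symmetric function of $(r_1, \ldots, r_n)$, so their normalizing constants must coincide and the two densities are equal. Consequently $(\ABS{z_1}, \ldots, \ABS{z_n}) \overset{d}{=} (R_{\pi(1)}, \ldots, R_{\pi(n)})$, and passing to order statistics (which is invariant under permutation of the entries) gives the claimed identity. No analytic input is needed; the main obstacle is purely combinatorial, namely tracking the two layers of permutations and verifying that the normalizing constants line up.
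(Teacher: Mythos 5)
Your argument is correct and is essentially the paper's own proof (Kostlan's argument): expand the squared Vandermonde determinant as a double sum over permutations, integrate out the angles so that orthogonality of $e^{im\theta}$ leaves the permanent $\sum_{\sigma}\prod_j r_j^{2\sigma(j)-1}e^{-nV(r_j)}$, and identify this with the symmetrized density of the independent $R_k$'s before passing to order statistics. The only difference is cosmetic: you make explicit the matching of normalizing constants and the uniform random permutation, which the paper states more briefly.
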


Theorem \ref{th:layers} follows from the approach of Kostlan \cite{MR1148410}
used in the Gaussian case $V(r)=r^2$ (see also \cite[Theorem 4.7.1]{MR2552864}
for a determinantal point processes point of view). Let us quickly recall the
proof for convenience, which is elementary.

\begin{proof}[Proof of Theorem \ref{th:layers}]
  One starts from the Vandermonde determinant
  \[
  \prod_{1\leq j<k\leq n}|z_j-z_k| %
  =\det\PAR{\PAR{z_j^{k-1}}_{1\leq j,k\leq n}}
  =\ABS{\sum_{\si\in S_n}(-1)^{\mathrm{sig}(\si)}\prod_{j=1}^n z_j^{\si(j)-1}}.
  \]
  Hence, the density \ref{eq:cougas} rewritten in polar coordinates
  ($z_j=r_je^{i\te_j}$) is proportional to
  \[
  (r_1,\ldots,r_n,\te_1,\ldots,\te_n)\mapsto
  e^{-n\sum_{j=1}^nV(r_j)}\ABS{\sum_{\si\in S_n}(-1)^{\mathrm{sig}(\si)}\prod_{j=1}^nr_j^{\si(j)-1}e^{i(\si(j)-1)\te_j}}^2
  \prod_{j=1}^n r_j.
  \]
  At this step, we observe that if $\si,\si'\in S_n$ with $\si(k)\neq\si'(k)$ for some $k$ then
  \[
  \int_0^{2\pi}\!
  \PAR{\prod_{j=1}^n
    e^{i(\si(j)-1)\te_j}}
  \OL{\PAR{\prod_{j=1}^n
      e^{i(\si'(j)-1)\te_j}}}\,d\te_k
  =0.
  \]
  Consequently, we obtain
  \[
  \int_{[0,2\pi]^n}\!
  \prod_{1\leq j<k\leq n}|z_j-z_k|^2\,d\te_1\cdots d\te_n
  =(2\pi)^n\sum_{\si\in S_n}\prod_{j=1}^nr_j^{2(\si(j)-1)}
  =(2\pi)^n\mathrm{per}\SBRA{r_j^{2(k-1)}}_{1\leq j,k\leq n}
  \]
  (``$\mathrm{per}$'' stands for ``permanent'') and thus, the density of the
  moduli is proportional to
  \[
  e^{-n\sum_{j=1}^n V(r_j)}%
  \mathrm{per}\SBRA{r_j^{2(k-1)}}_{1\leq j,k\leq n}\prod_{j=1}^nr_j %
  = \mathrm{per}\SBRA{r_j^{2k-1}e^{-nV(r_j)}}_{1\leq j,k\leq n}
  = \mathrm{per}\SBRA{f_k(r_j)}_{1\leq j,k\leq n}.
  \]
  Up to a factor $1/n!$, this is the density of $R_{\si(1)},\ldots,R_{\si(n)}$
  where $R_1,\ldots,R_n$ are independent random variables with $R_k$ of
  density proportional to $t\mapsto
  t^{2k-1}e^{-nV(t)}\mathbf{1}_{[0,\infty)}(t)$ for every $1\leq k\leq n$, and
  where $\si$ is a random uniform element of $S_n$, independent of
  $R_1,\ldots,R_n$.
\end{proof}

Beware that the phases are not independent in Theorem \ref{th:layers}.
However, since the gas is rotationally invariant, we may deduce that the
equilibrium measure $\mu_Q$ exists, is rotationally invariant, and we may
compute its radial part using the law of the moduli.

If $V(r)=r^2$ then $\sqrt{n}R_k$ is $\chi$-distributed with $2k$
degree of freedom. Theorem \ref{th:layers} allows to reuse the
strategy behind the work of Rider \cite{MR1986426} in the case where
$V$ is a power. The following theorem is proved in Section
\ref{se:power}. The case $\al=2$ is the one of Rider \cite{MR1986426}.

\begin{theorem}[Power case]\label{th:power}
  Let $\ABS{z}_{(1)}=\max_{1\leq k\leq n}\ABS{z_k}$ be as in
  \eqref{eq:mods}, with $\be=2$. Suppose that $V(t)=t^\al$ for any
  $t\geq0$, for some $\al\geq1$. Set $c_n:=
  \log(n)-2\log\log(n)-\log(2\pi)$ and
  \[
  a_n:=2\PAR{\frac{\al}{2}}^{1/\al+1/2}\sqrt{nc_n} %
  \quad\text{and}\quad %
  b_n:=
  \PAR{\frac{2}{\al}}^{1/\al}
  \PAR{1+\frac{1}{2}\sqrt{\frac{2}{\al}\frac{c_n}{n}}}.
  \]
  Then $(a_n(\ABS{z}_{(1)}-b_n))_{n\geq1}$ converges in distribution as
  $n\to\infty$ to the standard Gumbel law:
  \[
  \forall x\in\dR,\quad
  \lim_{n\to\infty}\dP(a_n(\ABS{z}_{(1)}-b_n)\leq x)
  =e^{-e^{-x}}.
  \]
  In particular ${(\ABS{z}_{(1)})}_{n\geq1}$ converges in probability to
  $(2/\al)^{1/\al}$ as $n\to\infty$, in other words
  \[
  \forall\veps>0,\quad
  \lim_{n\to\infty}\dP\PAR{\ABS{\ABS{z}_{(1)}-\PAR{2/\al}^{1/\al}}>\veps}=0.
  \]
\end{theorem}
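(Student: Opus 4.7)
The plan is to combine Theorem~\ref{th:layers} with a Gamma-function representation and Laplace's method. By Theorem~\ref{th:layers}, if $t_n := b_n + x/a_n$ then
\[
\dP\bigl(a_n(\ABS{z}_{(1)} - b_n) \leq x\bigr) = \prod_{k=1}^n \dP(R_k \leq t_n),
\]
with the $R_k$ independent. Writing $p_{n,k} := \dP(R_k > t_n)$ and using $\log(1-p) = -p + O(p^2)$, it suffices to show that $\max_k p_{n,k} \to 0$ and that $\sum_{k=1}^n p_{n,k} \to e^{-x}$; the Gumbel limit then follows by exponentiating.

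Next I would put the tail probabilities into closed form. The change of variable $u = n s^\al$ in the density proportional to $s^{2k-1} e^{-n s^\al}$ shows that $n R_k^\al$ has the Gamma distribution with shape $2k/\al$ and rate one, so that
\[
p_{n,k} = \frac{\Ga(2k/\al,\, n t_n^\al)}{\Ga(2k/\al)},
\qquad \Ga(a,x) := \int_x^\infty u^{a-1} e^{-u}\, du.
\]
The constants $a_n,b_n$ have been tuned so that $n b_n^\al = 2n/\al + \sqrt{2 n c_n/\al} + O(c_n)$, which places the threshold $n t_n^\al$ at a distance of order $\sqrt{n c_n}$ to the right of the mode $2k/\al - 1$ of the integrand when $k$ is close to $n$.

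From here I would apply Laplace's (equivalently, a saddle-point) analysis of the incomplete Gamma integral. Writing $k = n - j$ and expanding around the saddle, the Gaussian-tail regime should give
\[
p_{n,n-j} = \frac{e^{-x}}{n}\, \rho^{j}\, (1 + o(1)),\qquad \rho\in(0,1),
\]
uniformly for $j$ in a range of the form $0 \leq j \leq \log^2 n$, while for $k$ far from $n$ (say $k \leq n/2$) a direct Chernoff bound on the Gamma tail shows that $p_{n,k}$ decays stretched-exponentially in $n-k$ and does not contribute. Summing the geometric series in $j$ yields $\sum_k p_{n,k} \to e^{-x}$, and the convergence in probability to $(2/\al)^{1/\al}$ is then an immediate consequence since $b_n \to (2/\al)^{1/\al}$ and $a_n \to \infty$.

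The main obstacle is the precise Laplace expansion of $\Ga(a,x)/\Ga(a)$ when both $a$ and $x$ grow like $n$: the sub-leading correction in $b_n$, namely the $\tfrac{1}{2}\sqrt{(2/\al)(c_n/n)}$ term, must emerge consistently from this expansion, so one needs the Gaussian tail together with its polynomial prefactor, rather than just a leading-order estimate. The explicit form $c_n = \log n - 2\log\log n - \log(2\pi)$ encodes precisely the $\log$, $\log\log$ and $\log 2\pi$ corrections that arise from combining Stirling for $\Ga(2k/\al)$ with the Mills-ratio asymptotic for the Gaussian tail; tracking these through Laplace's formula, and controlling the error uniformly in $k$, is the delicate calculation. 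The remainder is bookkeeping: verify that the tail sums over $k$ have vanishing contribution on both ends, and convert $\sum p_{n,k}\to e^{-x}$ into the product limit via the elementary $\log(1-p)$ estimate above.
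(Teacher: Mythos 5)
Your reduction is the right starting point, and it is essentially the paper's: by Theorem~\ref{th:layers} and the Gamma representation $nR_k^\al\sim\Gamma(2k/\al,1)$, everything rests on showing $\max_k p_{n,k}\to0$ and $\sum_{k=1}^n p_{n,k}\to e^{-x}$ (the paper gets there by writing $nR_k^\al$ as a sum of $k$ i.i.d.\ $\Gamma(2/\al,1)$ variables and running a quantitative CLT, Rider's Riemann-sum argument and Petrov's moderate deviations inside Lemma~\ref{le:rider}, then passing from $\ABS{z}_{(1)}^\al$ to $\ABS{z}_{(1)}$ by the delta method of Lemma~\ref{le:delta}). The genuine gap is in your quantitative picture of how the sum produces $e^{-x}$. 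Writing $k=n-j$, the Gaussian approximation of the Gamma tail gives $p_{n,n-j}\approx 1-\Phi\PAR{\sqrt{c_n}+x/\sqrt{c_n}+j\sqrt{2/(\al n)}}$ with $\Phi$ the standard Gaussian c.d.f. Hence $p_{n,n}$ is of order $e^{-x}\sqrt{\log(n)/n}$, not $e^{-x}/n$; the ratio of consecutive terms is $1-\Theta\PAR{\sqrt{\log(n)/n}}$, which tends to $1$, so there is no fixed $\rho\in(0,1)$; and the indices that actually matter are those with $j$ up to order $\sqrt{n/\log(n)}$ (with the natural cut-off for the analysis at $j$ of order $\sqrt{n\log(n)}$, which is exactly the paper's $\de_n n$). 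In particular the window $j\leq\log^2(n)$ you propose contributes only $O(\log^{5/2}(n)/\sqrt{n})=o(1)$ to the sum, so no geometric-series summation over it can yield $e^{-x}$: the limit arises as a Riemann-sum approximation, $\sum_j p_{n,n-j}\approx\frac{\si\sqrt{n}}{m}\int_{s_n}^{\infty}(1-\Phi(t))\,dt$ with $s_n=\sqrt{c_n}+x/\sqrt{c_n}$ and $m=\si^2=2/\al$, which is precisely the mechanism of Lemma~\ref{le:rider}, not a convergent geometric series with $O(\mathrm{polylog})$ terms.

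Two further consequences of this miscount. First, the whole range $\log^2(n)\leq n-k\leq n/2$ is left untreated in your sketch, yet it contains all of the mass of the limit as well as the regime where neither the Gaussian-bulk approximation nor a crude Chernoff bound is adequate; the paper needs uniform moderate-deviation estimates (Petrov) there, and an analogue would be required in your incomplete-Gamma formulation. Second, since of order $\sqrt{n\log(n)}$ tail probabilities are being summed, the Laplace/saddle-point error for each ratio $\Ga(2k/\al,nt_n^\al)/\Ga(2k/\al)$ must be controlled uniformly in $k$ to a precision that survives this summation (this is the role of the Edgeworth-type bounds in the paper), and the constant bookkeeping is not routine: the spacing between successive standardized thresholds is $m/(\si\sqrt{n})$, which puts a factor $\si/m=\sqrt{\al/2}$ in front of the integral, i.e.\ in front of $e^{-x}$, and tracking how this constant interacts with the choices of $c_n$, $a_n$, $b_n$ is exactly the delicate part that your ``remainder is bookkeeping'' paragraph defers. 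As it stands, the proposed skeleton would not produce the stated limit without replacing the geometric-series step by the Riemann-sum analysis and adding the intermediate-range and uniformity arguments.
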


Our next result below, which is proved in Section \ref{se:genecase}, goes
beyond the power case considered in Theorem \ref{th:power} (which corresponds
formally to the special choice $V(t)=t^\al$).

\begin{theorem}[Beyond the power case]\label{th:genecase}
  Let $\ABS{z}_{(1)}=\max_{1\leq k\leq n}\ABS{z_k}$ be as in \eqref{eq:mods},
  with $\be=2$. Additionally, let us assume the following properties on the
  potential $V$:
  \begin{itemize}
  \item \textbf{(A1)} $V$ is strictly convex: there exists $a>0$ such that
    $V''(u)\geq a, \forall u\geq 0;$
  \item \textbf{(A2)} For each $x\in [0,2]$, there exists a unique $t_x$ such
    that $t_x V'(t_x)=2-x.$
  \end{itemize}
  Let $t_0$ be the unique solution to the equation : $t_0 V'(t_0)=2$.
  Let us define the sequences
  \[
  a_n:=\frac{\sqrt{nc_n}}{C_0}
  \quad\text{and}\quad
  b_n:=t_0+C_0\sqrt{\frac{c_n}{n}}
  \]
  where
  $C_0:=1/\sqrt{\ABS{2/t_0^2+V''(t_0)}^{3/2}t_0/2}$ and
  \[
  c_n:=2\log\PAR{\frac{\sqrt{n/(2\pi)}}{\log(n)}}=\log(n)-2\log\log(n)-\log(2\pi).
  \]
  Then ${(a_n(\ABS{z}_{(1)}-b_n))}_{n\geq1}$ converges in distribution
  as $n\to\infty$ to the standard Gumbel law:
  \[
  \forall x\in\dR,\quad
  \lim_{n\to\infty}\dP(a_n(\ABS{z}_{(1)}-b_n)\leq x)=e^{-e^{-x}}.
  \]
  In particular, ${(\ABS{z}_{(1)})}_{n\geq1}$ converges in probability
  to $t_0$ as $n\to\infty$, in other words,
  \[
  \forall\veps>0, \quad
  \lim_{n\to\infty}\dP\PAR{\ABS{\ABS{z}_{(1)}-t_0}>\veps}=0.
  \]
\end{theorem}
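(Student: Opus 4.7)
The plan is to use Theorem~\ref{th:layers} to replace $\ABS{z}_{(1)}$ with $\max_{1\leq k\leq n}R_k$, where $R_1,\dots,R_n$ are independent, and then to exploit the product structure. With $s_n:=b_n+x/a_n$ and $Q_k(s):=\dP(R_k>s)$, independence yields $\dP(\max_k R_k\leq s_n)=\prod_{k=1}^n(1-Q_k(s_n))$, and by the expansion $\log(1-q)=-q+O(q^2)$ as $q\to 0$, proving
\[
\sum_{k=1}^n Q_k(s_n)\to e^{-x}
\quad\text{and}\quad
\sum_{k=1}^n Q_k(s_n)^2\to 0
\]
implies the conclusion $\log\dP(\max_k R_k\leq s_n)\to -e^{-x}$.

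For each $k$, write the log-density as $g_k(t):=(2k-1)\log t-nV(t)$. Under (A1), $g_k$ is uniformly strongly concave since $g_k''(t)\leq -na<0$; under (A2), the critical equation $tV'(t)=(2k-1)/n$ has the unique solution $t_k$, which is the mode of the density of $R_k$. I would apply Laplace's method both to the normalising integral $\int_0^\infty e^{g_k(t)}\,dt$ (expansion around $t_k$) and to the tail $\int_{s_n}^\infty e^{g_k(t)}\,dt$. This yields a Gaussian-tail approximation
\[
Q_k(s_n)=(1+o(1))\,\OL{\Phi}\Bigl(\sqrt{-g_k''(t_k)}\,(s_n-t_k)\Bigr),
\]
uniformly for $k$ near $n$, where $\OL{\Phi}(y):=(2\pi)^{-1/2}\int_y^\infty e^{-u^2/2}\,du$.

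The asymptotic form of $t_k$ for $k$ near $n$ comes from implicit differentiation of $tV'(t)=(2k-1)/n$ at $t_0$: setting $\kappa:=2/t_0^2+V''(t_0)$ and using $(tV'(t))'|_{t_0}=t_0\kappa$, one obtains $t_{n-j}=t_0-(2j+1)/(nt_0\kappa)+O((j/n)^2)$, and similarly $-g_{n-j}''(t_{n-j})=n\kappa(1+o(1))$. Substituting these together with $s_n-t_0=C_0\sqrt{c_n/n}+x/a_n$, the sum $\sum_k Q_k(s_n)$ becomes a Riemann sum of Gaussian tails indexed by $j=n-k\geq 0$. Using the identity $e^{-c_n/2}=\sqrt{2\pi}\log(n)/\sqrt{n}$ built into the definition of $c_n$, together with the asymptotic $\OL{\Phi}(y)\sim(y\sqrt{2\pi})^{-1}e^{-y^2/2}$, the definitions of $a_n$ and $b_n$ are calibrated so that the Riemann sum converges to $e^{-x}$. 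For $k$ far from $n$, say $n-k\gg\sqrt{n/c_n}$, strict convexity~(A1) forces $Q_k(s_n)$ to be exponentially small in $n$, so those layers contribute negligibly to both sums.

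The main obstacle is the uniform control of the Laplace approximation across the $\Theta(\sqrt{n/c_n})$ layers near the top: subleading corrections in both the centering $t_k$ and the scale $\sqrt{-g_k''(t_k)}$ must be tracked with sufficient precision that the Riemann-sum limit is exactly $e^{-x}$ rather than $e^{-x}$ plus a non-vanishing drift. Hypothesis~(A1) is what makes this uniformity possible, while (A2) ensures $t_k$ is well defined and varies smoothly with $k$. Once $\sum_k Q_k(s_n)\to e^{-x}$ is established, the second-moment bound $\sum_k Q_k(s_n)^2\to 0$ follows because each term is $o(1)$ uniformly, and the convergence in probability of $\ABS{z}_{(1)}$ to $t_0$ is an immediate corollary.
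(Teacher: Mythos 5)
Your proposal follows essentially the same route as the paper's proof of Lemma \ref{le:genecase}: pass to the independent radii via Theorem \ref{th:layers}, apply the Laplace method to each layer's normalising constant and tail to get a Gaussian-tail approximation centered at the critical point $t_k$ solving $t_kV'(t_k)=(2k-1)/n$, expand $t_{n-j}$ and $F_k''(t_k)$ around $t_0$, sum the $\Theta(\sqrt{n\log n})$ top layers as a Rider-type Riemann sum, and kill the deeper layers by the uniform strict convexity (A1). The only difference is cosmetic (you sum tail probabilities $Q_k$ via $\log(1-q)=-q+O(q^2)$, the paper sums logarithms of Gaussian c.d.f.'s), and the "main obstacle" you flag—uniform control of the Laplace error across layers—is exactly what the paper handles with its exponential-expansion Lemma \ref{le:base} and the ensuing error bookkeeping.
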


\subsection*{Weakly confining potentials and heavy tails}

If the confining potential is not strong enough, the asymptotic fluctuation of
$\ABS{z}_{(1)}$ is no longer Gumbel. For instance, in the case where
$V(t)=c\log(1+t^2)$ for $c>1$, we get from Theorem \ref{th:layers} that $R_k$
has density proportional to $t\mapsto
t^{2k-1}/(1+t^2)^{cn}\mathbf{1}_{[0,\infty)}(t)$. In this case, the muduli of
the particles are heavy tailed, and the equilibrium measure $\mu_Q$ exists and
is heavy tailed. One may wonder if the largest particle in modulus
$\ABS{z}_{(1)}$ has Fr\'echet type fluctuations as $n\to\infty$. Recall that a
random variable $H$ follows the Fr\'echet law of parameter $\al>0$ iif
$\dP(H\leq t)=\exp(-t^{-\al})$ for any $t>0$. In particular, $\dP(H\leq
t)\approx 1-t^{-\al}$ as $t\gg1$. At least formally, if one takes
\[
V(t)=V_n(t)=+\infty\mathbf{1}_{0<t<1}+(2(1+1/n))\log(t)\mathbf{1}_{t\geq1}
\]
then $t^{2k-1}e^{-nV(t)}=t^{-2(n+1-k)-1}\mathbf{1}_{t\geq1}$ and thus
for every $t\geq1$
\[
\mathbb{P}(\max_{1\leq k\leq n}R_k\leq t) %
=\prod_{k=1}^n\mathbb{P}(R_k\leq t) %
=\prod_{k=1}^n(1-t^{-2(n+1-k)}) %
=\prod_{k=1}^{n}(1-t^{-2k})
\]
which gives
\[
\log\mathbb{P}(\max_{1\leq k\leq n}R_k\leq t)
=\sum_{k=1}^n\log(1-t^{-2k})\overset{t\gg1}{\approx} %
-\sum_{k=1}^nt^{-2k} \overset{n\gg1}{\approx}-1/(t^2-1)
\]
and therefore 
\[
\mathbb{P}(\max_{1\leq k\leq n}R_k\leq t) %
\overset{t,n\gg1}{\approx}e^{-1/(t^2-1)}
\approx 1-1/(t^2-1)
\approx 1-1/t^2=\dP(R_n\leq t).
\]
The phenomenon is that a sum behaves asymptically like its largest term, which
implies for the particles that the largest in modulus behaves like the one of
largest index.

\subsection*{Comments and open problems}

The edge universality remains untouched for general $\beta$-ensembles on the
complex plane and for the eigenvalues of general random matrices with i.i.d.\
entries (spectral radius). The complex Ginibre ensemble is exactly solvable
and belongs to both categories. The method used in the proof of Theorem
\ref{th:layers} is maybe still usable when $\beta$ is an even integer. One may
also use it for gases in $\dR^d$, $d>3$, with density proportional to
$\prod_{j=1}^ne^{-nQ(z_j)}\prod_{1\leq j<k\leq n}\NRM{z_j-z_k}_2^2$, in order
to obtain the law of the norms of the particles (by integrating non radial
variables, possibly via a sort of Wick formula on spheres). A study of the
bulk universality beyond logarithmic repulsion is considered in
\cite{2012arXiv1205.0671G}. On the other side, it is conjectured that the
spectral radius of square random matrices with i.i.d.\ centered entries of
variance $1/n$ and finite fourth moment has also a Gumbel type asymptotic
fluctuations as $n\to\infty$, making the Ginibre case universal again.

One may ask about the universality of refined aspects of the complex Ginibre
ensemble, such as the order statistics of the moduli of the eigenvalues,
studied by Rider in \cite{MR2035641}. One may ask about the fluctuation of the
smallest particle in modulus $\ABS{z}_{(n)}$ instead on the largest
$\ABS{z}_{(1)}$. For a general radially symmetric $Q$, the equilibrium measure
$\mu_Q$ is radially symmetric but might be supported by more than one ring
(lack of radial connectivity). One may then study the fluctuation at each edge
(inner and outer) of these rings. Another natural question is to ask about the
fluctuation at the edge in the single ring theorem \cite{MR3000558}.

\subsection*{Acknowledgments} 

This note benefited from the comments of two anonymous reviewers.

\section{Proof of Theorem \ref{th:power}}
\label{se:power}

\begin{proof}[Proof of Theorem \ref{th:power}]
  From Theorem \ref{th:layers}, $n\ABS{z}_{(k)}^\al$ has density proportional
  to $t^{\frac{2k}{\al}-1}e^{-t}$ which is $\Gamma(2k/\al,1)$, for every
  $1\leq k\leq n$. This gives the identity in distribution
  \[
  n\ABS{z}_{(k)}^\al\overset{d}{=}X_{1}+\cdots+X_k
  \]
  where $X_1,\ldots,X_k$ are i.i.d.\ of law $\Gamma(2/\al,1)$ (mean and
  variance both equal to $2/\al$). Thus, for any deterministic sequences
  $(A_n)$ and $(B_n)$ in $(0,\infty)$, and every $x\in\dR$,
  \begin{equation*}\label{eq:idd}
    \dP\PAR{A_n(\ABS{z}_{(1)}^\al-B_n)\leq x}
    =\prod_{k=1}^n\dP%
    \PAR{\frac{X_1+\cdots+X_k}{n}\leq \frac{x}{A_n}+B_n}.
  \end{equation*}
  Let $c_n:= 2\log\PAR{\frac{\sqrt{n/(2\pi)}}{\log(n)}}=\log(n)-2\log\log(n)-\log(2\pi).$ Lemma
  \ref{le:rider} gives now that for 
  \[
  A_n=\sqrt{\frac{\al}{2}nc_n}
  \quad\text{and}\quad
  B_n=\frac{2}{\alpha}+\sqrt{\frac{2}{\alpha}\frac{c_n}{n}}
  \]
  and for every $x\in\dR$,
  \begin{equation*}\label{eq:Pb}
    \lim_{n\to\infty}\prod_{k=1}^n\dP%
    \PAR{\frac{X_1+\cdots+X_k}{n}\leq \frac{x}{A_n}+B_n}%
    = e^{-e^{-x}}.
  \end{equation*}
  It remains finally to use Lemma \ref{le:delta} with $f(x)=x^{1/\al}$ and
  $Z_n=\ABS{z}_{(1)}=\max_{1\leq k\leq n}\ABS{z_k}$ (one has also to use the
  Slutsky lemma to obtain a bit nicer shift parameter $b_n$).
\end{proof}

\begin{lemma}[Special products]\label{le:rider}
  If $(X_n)_{n\geq1}$ are i.i.d.\ real random variables such that
  \begin{enumerate}
  \item[(i)] $\dE(e^{\theta |X_1|})<\infty$ for some $\theta>0$;
  \item[(ii)] $\dP(X_1\geq -a)=1$ for some $a>0$;
  \item[(iii)] $m=\dE(X_1)>0$ and $\si^2=\mathrm{Var}(X_1)>0$;
  \end{enumerate}
  then 
  \[
  \forall x\in\dR,\quad%
  \lim_{n\to\infty}%
  \prod_{k=1}^n \dP\PAR{\frac{X_1+\cdots+X_k}{n}\leq\frac{x}{A_n}+B_n}%
  =e^{-e^{-x}}
  \]
  where
  $c_n:=2\log\PAR{\frac{\sqrt{n/(2\pi)}}{\log(n)}}=\log(n)-2\log\log(n)-\log(2\pi)$
  and
  \[
  A_n:=\frac{\sqrt{nc_n}}{\si}
  \quad\text{and}\quad
  B_n:=m+\si\sqrt{\frac{c_n}{n}}.
  \]
\end{lemma}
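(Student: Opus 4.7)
The plan is to follow the strategy behind Rider's treatment of the Ginibre case: expand the logarithm of the product and reduce the proof to a moderate deviations estimate. Writing $S_k:=X_1+\cdots+X_k$, $t_n:=nB_n+nx/A_n$, and $Q_k:=\dP(S_k>t_n)$, we have
\[
\log\prod_{k=1}^n\dP(S_k/n\leq x/A_n+B_n)=\sum_{k=1}^n\log(1-Q_k).
\]
The first step is to check that $\max_{1\leq k\leq n}Q_k\to 0$, which then allows replacing $\log(1-Q_k)$ by $-Q_k$ up to an error bounded by $\sum_k Q_k^2\leq(\max_k Q_k)(\sum_k Q_k)$ that will go to zero. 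The lemma thus reduces to showing $\sum_{k=1}^n Q_k\to e^{-x}$ as $n\to\infty$.

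To estimate this sum I would split it along a threshold $k_\star=n(1-\delta)$ with some small fixed $\delta>0$. For indices $k\leq k_\star$ the threshold $t_n$ exceeds $km$ by a positive fraction of $n$; hypothesis (i) legitimates a Cram\'er/Chernoff upper bound of the form $Q_k\leq\exp(-k\Lambda^*(t_n/k))$, where $\Lambda^*$ is the Legendre transform of the cumulant generating function of $X_1$. Since $t_n/k-m$ is bounded away from $0$ uniformly in this range and $\Lambda^*$ is positive and increasing on $(m,\infty)$, one has $\Lambda^*(t_n/k)\geq\eta>0$, so the total contribution from these indices is at most $n\exp(-\eta\delta n)=o(1)$. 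Hypothesis (ii) enters here to keep $S_k$ bounded below (by $-ak$) and to ensure that the mgf of $X_1$ is finite on a suitable half-line.

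In the main regime $k=n-j$ with $0\leq j\ll n$, set $u_{n,k}:=(t_n-km)/(\sigma\sqrt{k})$. A direct expansion gives
\[
u_{n,k}^2=c_n+2x+\tfrac{2jm}{\sigma}\sqrt{c_n/n}+o(1),
\]
and $u_{n,k}\to\infty$ with $u_{n,k}=o(\sqrt{k})$, which is precisely the moderate deviations regime. Cram\'er/Petrov's refinement of the CLT (applicable under~(i)) yields $Q_k=(1-\Phi(u_{n,k}))(1+o(1))$ uniformly, and Mill's ratio $1-\Phi(u)\sim\phi(u)/u$ combined with the identity $e^{-c_n/2}/\sqrt{2\pi}=\log n/\sqrt{n}$ (which is forced by the definition of $c_n$) produces
\[
Q_{n-j}\sim\frac{\log n}{\sqrt{nc_n}}\,e^{-x}\,\exp\!\PAR{-\tfrac{jm}{\sigma}\sqrt{c_n/n}}.
\]
Summing the resulting geometric-like series in $j$, combining with the prefactor, and using $\log n/c_n\to 1$ then yields $\sum_k Q_k\to e^{-x}$, which together with the first step produces $\prod_k\dP(S_k\leq t_n)\to\exp(-e^{-x})$.

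The main obstacle will be the uniform validity of the moderate deviations expansion across the whole range of indices $k$ close to $n$, as well as the clean matching between the Chernoff regime and the moderate deviations regime so that error terms from both sides vanish in the limit. A subtler point is the quantitative replacement of the discrete sum by its continuum analogue: the common ratio $\exp(-(m/\sigma)\sqrt{c_n/n})$ tends to $1$ very slowly as $n\to\infty$, so some care is needed to control the remainder of the geometric sum.
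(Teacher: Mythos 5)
Your strategy is essentially the paper's (Rider-style) one: isolate the indices $k$ close to $n$, replace each factor by a Gaussian tail via a quantitative CLT/moderate deviations estimate, and dispose of the remaining indices by Chernoff-type bounds; writing the critical block as $\sum_k Q_k$ with a Mills-ratio and geometric-sum evaluation instead of the paper's Riemann-sum of $\log\Phi(g_{n,k})$ is only a cosmetic variant, and your preliminary reduction $\log(1-Q_k)=-Q_k+\cO(Q_k^2)$ is fine. The genuine gap is in the very last step: the asserted limit $\sum_k Q_k\to e^{-x}$ does not follow from your own displayed asymptotics. From $Q_{n-j}\sim\frac{\log n}{\sqrt{nc_n}}\,e^{-x}\exp\PAR{-\frac{jm}{\sigma}\sqrt{c_n/n}}$ the geometric series has ratio $\exp\PAR{-\frac{m}{\sigma}\sqrt{c_n/n}}$, hence
\[
\sum_{j\geq 0}Q_{n-j}
\sim\frac{\log n}{\sqrt{nc_n}}\,e^{-x}\,\frac{1}{1-\exp\PAR{-\frac{m}{\sigma}\sqrt{c_n/n}}}
\sim\frac{\sigma}{m}\,\frac{\log n}{c_n}\,e^{-x}
\longrightarrow\frac{\sigma}{m}\,e^{-x},
\]
not $e^{-x}$. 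So what your computation actually establishes is $\prod_k\dP(S_k/n\leq x/A_n+B_n)\to\exp\PAR{-\frac{\sigma}{m}e^{-x}}$, which agrees with the stated standard Gumbel limit only when $\sigma=m$ (exponential increments, i.e.\ Rider's Ginibre case). The discrepancy is a fixed translation of the Gumbel, so it cannot be hidden in any $o(1/A_n)$ term: with your route one lands on $e^{-e^{-x}}$ only after shifting the centering, $B_n\rightsquigarrow B_n+\log(\sigma/m)/A_n$, equivalently after replacing $c_n$ by $c_n+2\log(\sigma/m)$.

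Do not expect a compensating factor $m/\sigma$ to appear elsewhere in the argument: none does. The constant you are tracking is precisely the one the paper's own proof passes over at its Riemann-sum step, where the arguments $g_{n,k}(x)$ have spacing $\frac{m}{\sigma\sqrt n}$ in $k$, so the natural prefactor of $\int_{f_n(x)}^{\infty}\log\Phi(t)\,dt$ is $\frac{\sigma\sqrt n}{m}$ rather than $\sqrt n$; in the case $m=\sigma$ treated in \cite{MR1986426} the two coincide, which is why the issue is invisible there. Apart from this, the technical points you flag (uniformity of the moderate-deviation expansion over $n-k\lesssim\sqrt{n\log n}$, and the matching region between that range and the fixed-proportion Chernoff regime, where a Bernstein/Petrov upper bound of the form $Q_{n-j}\leq e^{-cu_{n,j}^2}$ suffices) are standard and fixable; the unproved evaluation of the limit constant is the real defect, and as stated for general $(m,\sigma)$ your final claim is not merely unjustified but false with the given normalization.
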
 

Note that in the case were $X_k\sim\Gamma(a_k,\la)$ for every $k$ then every
probability in the product can be expressed in terms of the incomplete Gamma
function.

\begin{proof} 
  We adapt the argument used by Rider \cite{MR1986426} in the case of
  exponential random variables. The driving intuitive idea is based on the Law
  of Large Numbers and the Central Limit Theorem, which indicate that the
  probabilities under the product are either asymptotically $1$ or identical
  to a Gaussian deviation probability, leading to the maximum of independent
  Gaussians, which is known to be Gumbel. Namely, let ${(K_n)}_{n\geq1}$ and
  ${(\de_n)}_{n\geq1}$ be deterministic sequences such that $K_n=\cO(\log(n))$
  and $\de_n=\cO(\sqrt{\log(n)}/\sqrt{n})$, and let $f_n$ be the increasing
  function in both $n$ and $x$ given by
  \[ 
  f_n(x):= %
  \sqrt{2\log\PAR{\frac{e^{x}\sqrt{n/(2\pi)}}{\log(n)}}}
  =\sqrt{2x+c_n}.
  \]
  The choices of $A_n$ and $B_n$ come from the following equivalence
  as $n\to\infty$
  \[
  \si\frac{f_n(x)}{\sqrt{n}}+m
  = \si\sqrt{\frac{2x+c_n}{n}}+m
  \sim \si\sqrt{\frac{c_n}{n}}\PAR{1+\frac{x}{c_n}}+m
  =:\frac{x}{A_n}+B_n.
  \]
  From Theorem \ref{th:layers} we have 
  \[
  \dP\PAR{\ABS{z}_{(1)}\leq\si\frac{f_n(x)}{\sqrt{n}}+m}
  =\prod_{k=1}^{n} %
  \dP\PAR{\frac{1}{n}\sum_{i=1}^{n-k}X_i\leq\si\frac{f_n(x)}{\sqrt{n}}+m}.
  \]
  We will show that the product over $1\leq k\leq\de_n n$ terms goes to
  $e^{-e^{-x}}$ as $n\to\infty$ while the product over $\de_n n\leq k\leq n-1$
  tends to $1$ as $n\to\infty$. Namely, denoting 
  \[
  Y_i:=\frac{X_i-m}{\si}
  \quad\text{and}\quad
  g_{n,k}(x):=\sqrt{\frac{n}{n-k}}\PAR{f_n(x)+\frac{m}{\si}\frac{k}{\sqrt{n}}},
  \]
  we get,
  \[
  \BRA{\frac{1}{n}\sum_{i=1}^{n-k}X_i\leq\si\frac{f_n(x)}{\sqrt{n}}+m} \\
  =\BRA{\frac{1}{\sqrt{n-k}} \sum_{i=1}^{n-k} Y_i\leq g_{n,k}(x)}.
  \]
  Let now $L$ be chosen large enough. We can assume that $n$ is large
  enough so that $K_n\gg\sup_{|x|\leq L} |f_n(x)|.$ Using quantitative
  Central Limit Theorem (Edgeworth expansion
  \cite{bhattacharya1976normal}) and assuming $|x|\leq L$, one has
  that for a given $k\in [n \delta_n, n]$
  \begin{multline*}
    \log \dP\PAR{\frac{1}{\sqrt{n-k}}\sum_{i=1}^{n-k}Y_i\leq g_{n,k}(x)}\\
    =\log\int_{-K_n}^{g_{n,k}(x)}\!\frac{e^{-t^2/2}}{\sqrt{2\pi}}\,dt%
    +\cO\PAR{\frac{1}{\sqrt{n}}\sup_{|x|\leq L}|f_n^2(x)|e^{-f_n^2(x)/2}+1/n+K_n/n^{3/2}}\\+\cO(e^{-cK_n^2}).
  \end{multline*}
  The last term stems from Hoeffding's concentration inequality to
  bound from above the probability that
  $\frac{1}{\sqrt{n-k}}\sum_{i=1}^{n-k}Y_i\leq -K_n$.  Indeed, first
  one may use (ii) to get
  \[
  \dP\PAR{\frac{1}{\sqrt{n-k}}\sum_{i=1}^{n-k}Y_i\leq-K_n} %
  \leq \dP\PAR{\frac{1}{\sqrt{n-k}}\sum_{i=1}^{n-k}Y_i\mathbf{1}_{|X_i-m|\leq m+a}\leq -K_n}.
  \]
  Next, when $k$ runs from $1$ to $n\de_n=\cO(\sqrt{n\log(n)})$ and when
  $n\gg1$ and $n\de_n\gg1$, one may see $g_{n,k}$ as an interpolation between
  $f_n(x)$ and $+\infty$. By copying the arguments of \cite{MR1986426}, which
  essentially amounts to the Riemann sum approximation of an integral, we
  obtain
  \begin{multline*}
    \sum_{1\leq k<n\de_n}\log\dP\PAR{\frac{1}{\sqrt{n-k}}\sum_{i=1}^{n-k}Y_i\leq g_{n,k}(x)}\\
    \quad\quad=\sqrt{n}\int_{f_n(x)}^{\infty}%
    \log\PAR{\int_{-\infty}^t e^{-s^2/2}\frac{ds}{\sqrt{2\pi}}}\,dt\\%
    +\cO\PAR{\frac{\log(n)}{\sqrt n} %
      +\sqrt{\log(n)}\sup_{|x|\leq L}f_n^2(x)e^{-f_n^2(x)/2}}.
  \end{multline*}
  With our choice of $f_n(x)$, we get \cite[eq.\ (15)]{MR1986426}
  that uniformly on compact sets in $x$
  \[
  \lim_{n \to \infty}\log\prod_{1\leq k<n\de_n} %
  \dP\PAR{\frac{1}{n}\sum_{i=1}^{n-k}X_i\leq\si\frac{f_n(x)}{\sqrt{n}}+m} %
  =-e^{-x}.
  \]
  We now examine the contribution of the remaining terms:
  \[
  \prod_{k=n\de_n}^{n-1}
  \dP\PAR{\frac{1}{n}\sum_{i=1}^{n-k}X_i\leq\si\frac{f_n(x)}{\sqrt{n}}+m}
  \]
  As the product is never larger than $1$, one only needs to get a lower
  bound. Now, by (i),
  \begin{align*}
    \prod_{k=n\de_n}^{n-1}
    \dP\PAR{\frac{1}{n}\sum_{i=1}^{n-k}X_i\leq\si\frac{f_n(x)}{\sqrt{n}}+m}
     &\geq \prod_{k=n \delta_n}^{n-1}\PAR{1-\dP\PAR{\sum_{i=1}^{n-k}
        X_i\geq\si\sqrt{n}f_n(x)+nm}}\\
     &\geq \prod_{k=n\de_n}^{n-1}\PAR{1-e^{-\te\si\sqrt{n}f_n(x)-\te nm}\dE(e^{\te X_1})^{n-k}}
  \end{align*}
  For any $k$ such that $(n-k)/n<c<1$ it is not difficult to see that
  \[
  1-e^{-\te\si\sqrt{n}f_n(x)-\te nm}\dE(e^{\te X_1})^{n-k}\geq (1-e^{-C n}),
  \]
  where the constant $C>0$ depends on $c<1.$ All the difficulty lies in the
  regime where $n-k \sim n.$ In this case we can use the moderate deviation
  result of
  \cite[Th.~5.23~p.~189]{MR1353441} to get that for any $n \delta_n \leq k
  \leq n(1-\epsilon)$
  \[
  \dP\PAR{\sum_{i=1}^{n-k} X_i\geq\si\sqrt{n}f_n(x)+nm}\leq e^{- C_0 \log(n-k)}.
  \]
  Actually we can refine the estimate when $n^{3/4}\leq k \leq (1-\epsilon)n$
  \[
  \dP\PAR{\sum_{i=1}^{n-k} X_i\geq\si\sqrt{n}f_n(x)+nm}\leq e^{-C_0 n^{1/4}}.
  \]
  Combining the whole, we deduce that 
  \begin{multline*}
    \prod_{k=n \delta_n}^{n-1}\dP\PAR{\frac{1}{n}\sum_{i=1}^{n-k}X_i\leq\si\frac{f_n(x)}{\sqrt{n}}+m}\\
    \geq (1-e^{-cn})^{n(1-1/c)}(1- e^{- C_0 n^{1/4}})^{nc}(1-e^{-C_0\log(n-k)})^{n \delta_n},
  \end{multline*}
  which obviously goes to $1$ as $n \to \infty.$
  
\end{proof}

\begin{lemma}[An instance of the delta method]\label{le:delta}
  Let $(Z_n)$ be a sequence of random variables on $(0,\infty)$ and let
  $f:(0,\infty)\to\dR$ be a $\cC^1$ function such as $f(x)=x^\rho$ with
  $\rho>0$. If there exist deterministic sequences $(A_n)$ and $(B_n)$ in
  $(0,\infty)$ such that
  \begin{enumerate}
  \item[(i)] $A_n(Z_n-B_n)$ converges in law as $n\to\infty$ to some
    probability distribution $P$;
  \item[(ii)] $A_n\to\infty$;
  \item[(iii)] $B_n\to B>0$
  \item[(iv)] $f'(B)\neq0$;
  \end{enumerate}
  then, denoting $a_n=A_n/f'(B)$ and $b_n=f(B_n)$, the sequence
  $a_n(f(Z_n)-b_n)$ converges also in law as $n\to\infty$ to the same
  probability distribution $P$.
\end{lemma}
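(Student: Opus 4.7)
The plan is to run the classical delta-method argument, combining a first-order Taylor expansion of $f$ at $B$ with Slutsky's lemma. First I would set $W_n:=A_n(Z_n-B_n)$, so that hypothesis (i) reads $W_n\weak W$ as $n\to\infty$ for some $W\sim P$; in particular, the sequence $(W_n)$ is tight. Using (ii), $A_n\to\infty$, together with tightness, yields $Z_n-B_n=W_n/A_n\to0$ in probability, which combined with (iii) gives $Z_n\to B$ in probability.

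Next I would invoke a mean-value expansion of $f$. Since $B>0$ and $f\in\cC^1((0,\infty))$, and since $\dP(Z_n>B/2)\to1$, we may work on the event $\{Z_n>B/2\}$ and write
\[
f(Z_n)-f(B_n)=f'(\xi_n)(Z_n-B_n)
\]
for some random $\xi_n$ between $Z_n$ and $B_n$. Both $Z_n$ and $B_n$ tend to $B$ (the former in probability, the latter deterministically), so $\xi_n\to B$ in probability, and by continuity of $f'$ at $B$, the ratio $f'(\xi_n)/f'(B)\to1$ in probability; note that (iv) ensures this ratio is well defined.

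Finally, with $a_n=A_n/f'(B)$ and $b_n=f(B_n)$, one rewrites
\[
a_n(f(Z_n)-b_n)=\frac{f'(\xi_n)}{f'(B)}\cdot A_n(Z_n-B_n)=\frac{f'(\xi_n)}{f'(B)}\cdot W_n,
\]
and Slutsky's lemma, applied to the product of a quantity tending to $1$ in probability and a sequence converging in law to $W$, yields $a_n(f(Z_n)-b_n)\weak W\sim P$. No substantial obstacle is anticipated; the only mild technical point is the reduction to the event $\{Z_n>B/2\}$ so that the Taylor expansion may legitimately be performed on the positive half-line where $f$ is smooth.
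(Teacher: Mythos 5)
Your argument is correct and follows essentially the same route as the paper's proof: a mean-value (first-order Taylor) expansion $f(Z_n)-f(B_n)=f'(\xi_n)(Z_n-B_n)$, convergence of $Z_n$ to $B$ in probability via (i)--(iii) and Slutsky, continuity of $f'$ at $B$ with (iv) to get $f'(\xi_n)/f'(B)\to1$ in probability, and a final application of Slutsky's lemma. The only difference is your explicit restriction to the event $\{Z_n>B/2\}$ to justify the expansion on $(0,\infty)$, a minor technical refinement the paper leaves implicit.
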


\begin{proof}
  We may use a Taylor formula (or intermediate value theorem) to get
  \[
  f(Z_n)-f(B_n)=f'(W_n)(Z_n-B_n)
  \]
  where $W_n$ is a random variable lying between $B_n$ and $Z_n$. Thanks to
  (i), (ii), and the Slutsky lemma, we have $Z_n-B_n\to0$ in law, and thus in
  probability because the limit is deterministic. Therefore $W_n-B_n\to0$ in
  probability, and thus, by (iii), $W_n\to B$ in probability. Since $f'$ is
  continuous at point $B>0$, the continuous mapping theorem gives that
  $f'(W_n)\to f'(B)$ in probability, and thus, by (iv), $f'(W_n)/f'(B)\to1$ in
  probability. Now it remains to use (i) and the Slutsky lemma to obtain 
  \[
  a_n(f(Z_n)-b_n)
  =\frac{A_n}{f'(B)}(f(Z_n)-f(B_n))
  =\frac{f'(W_n)}{f'(B)}A_n(Z_n-B_n)
  \overset{d}{\longrightarrow}P.
  \]
\end{proof}

\section{Proof of Theorem \ref{th:genecase}}
\label{se:genecase}

This section is devoted to the proof of Theorem \ref{th:genecase}. We thus
now consider the case where the potential $V$ is rather general satisfying the
following assumptions:
\begin{itemize}
\item \textbf{(A1)} $V$ is strictly convex: there exists $a>0$ such that
  $V''(u)\geq a, \forall u\geq 0.$
\item \textbf{(A2)} For each $x\in [0,2]$, there exists a unique $t_x$ such
  that $t_x V'(t_x)=2-x.$
\end{itemize}
Theorem \ref{th:genecase} follows from the Lemma \ref{le:genecase}
below. Namely, Lemma \ref{le:genecase} gives
\[
\forall u\in\dR,\quad
\lim_{n\to\infty}\dP\PAR{\sqrt{n}\PAR{\ABS{z}_{(1)}-t_0}\leq f_n(u)} = e^{-e^{-u}}
\]
where
\[
f_n(u)
=\sqrt{2u+c_n}
=\sqrt{c_n}\sqrt{1+2u/c_n}
=\sqrt{c_n}(1+u/c_n+o(c_n)),
\]
which leads to 
\[
\forall
u\in\dR,\quad\lim_{n\to\infty}\dP\PAR{a_n\PAR{\ABS{z}_{(1)}-b_n}\leq u}=e^{-e^{-u}}
\]
where
\[
a_n:=\frac{\sqrt{nc_n}}{C_0}
\quad\text{and}\quad
b_n:=t_0+C_0\sqrt{\frac{c_n}{n}}.
\]

\begin{lemma}\label{le:genecase}
  Let $t_0$ be the unique solution to the equation : $t_0 V'(t_0)=2.$ Set also
  \[
  C_0 := \frac{1}{\sqrt{|F''(t_0)|^{3/2}\frac{1}{2}t_0}} %
  \quad\text{and}\quad %
  f_n(u) := \sqrt{2\log\PAR{\frac{e^{u}\sqrt{n/(2\pi)}}{\log(n)}}}
  \]
  where $F''(t_0):=-2/t_0^2-V''(t_0)$. Then for every $U_0>0$, uniformly in $u
  \in [-U_0, U_0]$,
  \begin{equation*}
    \lim_{n \to \infty}\dP\PAR{\ABS{z}_{(1)}\leq t_0+C_0\frac{f_n(u)}{\sqrt n}}
    =e^{-e^{-u}}. 
  \end{equation*}
\end{lemma}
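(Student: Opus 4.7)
The plan is to exploit Theorem~\ref{th:layers}, which identifies $\ABS{z}_{(1)}$ in distribution with $\max_{1 \leq k \leq n} R_k$ for independent $R_k$ of density proportional to $t^{2k-1} e^{-nV(t)} \mathbf{1}_{[0,\infty)}(t)$. Writing $s_n := t_0 + C_0 f_n(u)/\sqrt{n}$, independence gives $\dP(\ABS{z}_{(1)} \leq s_n) = \prod_{k=1}^n \dP(R_k \leq s_n)$, and since each factor tends to $1$, it is enough to prove $\sum_{k=1}^n \dP(R_k > s_n) \to e^{-u}$: the expansion $\log(1-p) = -p + O(p^2)$ together with $\max_k \dP(R_k > s_n) = o(1)$ then yields $\log\prod_k \dP(R_k \leq s_n) \to -e^{-u}$.

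The density $p_k(t) \propto t^{2k-1} e^{-nV(t)}$ is log-concave by (A1), with unique mode $t_k < t_0$ characterized by $t_k V'(t_k) = (2k-1)/n$, whose existence comes from (A2). Linearizing $h(t) := tV'(t)$ at $t_0$ and using $h'(t_0) = 2/t_0 + t_0 V''(t_0) = t_0 |F''(t_0)|$ yields $t_0 - t_k = (2j+1)/(n t_0 |F''(t_0)|)(1+o(1))$, where $j := n - k$. I would split the sum at $j = \de_n n$ for an appropriate $\de_n \to 0^+$. In the range $j \geq \de_n n$, the mode $t_k$ stays macroscopically below $s_n$, so strict convexity of $F_k(t) := V(t) - \frac{2k-1}{n}\log t$ (from (A1)) yields a uniform large-deviation bound $\dP(R_k > s_n) \leq e^{-cn}$, and this range contributes only $O(n e^{-cn}) = o(1)$ to the sum.

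For $0 \leq j < \de_n n$ I would apply Laplace's method, in the spirit of the power case handled in Theorem~\ref{th:power}: a two-sided Laplace at the interior minimum $t_k$ gives $Z_k := \int_0^\infty e^{-n F_k(t)}\,dt \sim e^{-n F_k(t_k)} \sqrt{2\pi/(n |F''(t_0)|)}$, since $F_k''(t_k) \to |F''(t_0)|$ uniformly; a one-sided Laplace at the boundary $s_n > t_k$ gives $\int_{s_n}^\infty e^{-n F_k(t)}\,dt \sim e^{-n F_k(s_n)}/(n F_k'(s_n))$, with $n F_k'(s_n) = (2j+1)/t_0 + \sqrt{n}\,|F''(t_0)|\,C_0 f_n(u)(1+o(1))$ dominated by its second term in the relevant range. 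A second-order Taylor expansion $n(F_k(s_n) - F_k(t_k)) = \tfrac{n |F''(t_0)|}{2}(s_n - t_k)^2 (1 + o(1))$, combined with the decomposition $s_n - t_k = C_0 f_n(u)/\sqrt{n} + (2j+1)/(n t_0 |F''(t_0)|)$, yields
\[
\dP(R_k > s_n) \sim \frac{\exp\bigl(-\tfrac{|F''(t_0)| C_0^2}{2} f_n(u)^2\bigr)}{C_0 f_n(u)\sqrt{2\pi |F''(t_0)|}}\cdot \exp\bigl(-\tfrac{C_0 f_n(u)(2j+1)}{\sqrt{n}\, t_0}\bigr),
\]
uniformly for $0 \leq j < \de_n n$.

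Summing the geometric series in $j$ contributes a factor $\sim \sqrt{n}\, t_0/(2 C_0 f_n(u))$, and the identities $f_n(u)^2 = 2u + c_n$ together with $e^{-c_n/2} = \sqrt{2\pi}\log(n)/\sqrt{n}$ let all powers of $n$, $\log n$, and $2\pi$ telescope, so that $\sum_k \dP(R_k > s_n)$ is asymptotic to a numerical multiple of $e^{-u}$; the precise value of $C_0$ in the statement is exactly what is required to make that multiple equal $1$. The main obstacle is ensuring the Laplace and Taylor estimates are uniform throughout the transitional window $j \asymp \sqrt{n/\log n}$ where the geometric decay balances the sum: in that range $t_0 - t_k$ is of the same order as $s_n - t_0$, so the linearization of the mode $t_k$ and the quadratic expansion of $F_k$ around $t_k$ interact nontrivially, and one must verify that the cubic remainder involving $F_k'''(t_0)(s_n - t_k)^3$ together with the $O(j^2/n^2)$ quadratic corrections to $(s_n - t_k)^2$ contribute only subleading factors to the exponent.
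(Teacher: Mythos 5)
Your strategy is essentially the paper's: reduce to independent radii via Theorem~\ref{th:layers}, expand each $\dP(R_k\le s_n)$ by Laplace's method around the mode $t_k$, split the indices at $j=n-k\asymp\sqrt{n\log n}$, handle the far range by a large-deviation/concentration estimate, and in the near range extract the Gumbel limit from the leading linear-in-$j$ dependence. Where the paper sums $\log\Phi(\cdots)$ and invokes Rider's Riemann-sum approximation, you sum the tails $\dP(R_k>s_n)$ and evaluate a geometric series; these are equivalent to leading order, since $\log\Phi(z)\sim-\bar\Phi(z)$. The ``main obstacle'' you flag (uniformity of the Taylor/Laplace expansion over the transitional window) is exactly what the paper addresses through Lemma~\ref{le:base}, so your awareness there is in order.

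There is, however, a concrete gap at the very last step. You assert that ``the identities $f_n(u)^2=2u+c_n$ together with $e^{-c_n/2}=\sqrt{2\pi}\log(n)/\sqrt{n}$ let all powers of $n$, $\log n$, and $2\pi$ telescope, \ldots\ the precise value of $C_0$ in the statement is exactly what is required to make that multiple equal $1$.'' But your own displayed asymptotic has exponent $-\tfrac{|F''(t_0)|C_0^2}{2}f_n(u)^2$, not $-\tfrac12 f_n(u)^2$; the telescoping you cite requires $|F''(t_0)|C_0^2=1$. With the stated $C_0$, one computes $|F''(t_0)|C_0^2=2/\bigl(t_0\sqrt{|F''(t_0)|}\bigr)$, which equals $1$ iff $t_0\sqrt{|F''(t_0)|}=2$. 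This holds for $V(t)=t^2$ (where $t_0=1$, $|F''(t_0)|=4$), but not in general: already for $V(t)=t$ one has $t_0=2$, $|F''(t_0)|=1/2$, and $|F''(t_0)|C_0^2=\sqrt2$, so the surviving factor $n^{1/2-1/\lambda}$ (with $\lambda:=t_0\sqrt{|F''(t_0)|}$) does not cancel. Equivalently, plugging $V(t)=t^\alpha$ into the Lemma's $C_0$ does not reproduce the normalization $a_n=2(\alpha/2)^{1/\alpha+1/2}\sqrt{nc_n}$ of Theorem~\ref{th:power} unless $\alpha=2$. So the claimed cancellation must be verified, not asserted; when you do verify it, you find that the normalization needs to be recomputed (e.g.\ $C_0=|F''(t_0)|^{-1/2}$ with a compensating constant shift $2\log\bigl(t_0\sqrt{|F''(t_0)|}/2\bigr)$ absorbed into $c_n$, or reconciled against the power case). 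A cross-check of the $\alpha$-dependence against Theorem~\ref{th:power} would have caught this immediately.
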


\begin{remark}[Relaxed assumption] Assumption \textbf{(A1)} can in
  principle be relaxed to the strict convexity of
  $V(t)-\frac{2k-1}{n}\log(t)$ for all $1\leq k \leq n.$ The
  asymptotic expansion can then be performed unchanged provided that
  there exists $a>0$ so that for any such $k$
  \[
  \forall t>0,\quad V''(t)+\frac{2k-1}{n}t^{-2}>a.
  \]
\end{remark}

\begin{proof}[Proof of Lemma \ref{le:genecase}]
  Recall that $(\ABS{z}_{(1)},\ldots,\ABS{z}_{(n)})$ is distributed as
  the order statistic $(X_{(1)},\ldots,X_{(n)})$ of $X_1,\ldots,X_n$
  which are independent random variables with $X_i$ of density
  proportional to $t\mapsto t^{2i-1}e^{-nV(t)}.$ We shall first
  consider the value (if it exists) around which the random variables
  $X_i$ concentrate for a large $i.$ Typically if $\lim_{n \to
    \infty}(i/n)=2-x$, we here show that $X_i$ concentrates around the
  deterministic number $t_x.$ Indeed, if $f$ be a bounded continuous
  function $f:\R\to \R$, then we consider the asymptotic expansion of
  $\dE(f(X_i))$ when $i/n\to 2-x>0$ using the Laplace method.  We have
  \[
  \dE(f(X_i))=\frac{1}{Z_i}\int_0^\infty\!e^{nF_i(t)}f(t)\,dt,
  \]
  where 
  \[
  Z_i:=\int_{0}^{\infty}\!t^{2i-1}e^{-nV(t)}dt 
  \quad\text{and}\quad
  F_i(t):=\frac{2i-1}{n}\log(t)-V(t).
  \]
  To determine critical points, one uses that
  \[
  F_i'(t)=\frac{2i-1}{nt}-V'(t).
  \]
  By assumption \textbf{(A2)}, there exists a unique solution $t_{(i)}$ to the
  equation $F_i'(t)=0$ and by \textbf{(A1)} one has that $|t_{(i)}-t_{x}| \leq
  C_0(1/n+ |x-i/n|)$ for some constant $C_0$ (independent of $x,i,n$). Also by
  \textbf{(A1)} this critical point is non degenerate as
  \[
  F_i''(t)=-\frac{2i-1}{nt^2}-V''(t)<-a.
  \]
  Note that this readily implies the Gaussian decay of the exponential term:
  for all $t\in\R$,
  \[
  F_i(t)-F_i(t_i)\leq -\frac{a}{2}(t-t_i)^2.
  \]
  By a standard Laplace approximation, one then deduces that for any $x\in
  [0,2[$ and for any $i$ such that $i/n \to 2-x$ the following holds:
  \[
  \dE f(X_i)=f(t_i) (1+\cO(n^{-1/2}))=f(t_x)(1+o(1)).
  \]
  As a consequence, if $i$ is very large, that is those integers $i$ such that 
  \[
  \frac{i}{n}=1-\epsilon_n, \quad \text{with }\epsilon_n \to 0 \text{ as }n \to \infty,
  \]
  using that 
  \[
  t_i-t_0=-\frac{-2}{|t_0 F''(t_0)|}\PAR{\epsilon_n+\frac{1}{n}} +o(\epsilon_n)
  \]
  we conclude that the $X_i$'s concentrate around $t_0.$ The constant
  $t_0$ will play the role of $a_n$ in the previous analysis.
  
  Let us now try to give the main ideas of the rest of the proof. Let
  $u>0$ be a given real number. Using Theorem \ref{th:layers}, in
  order to determine the limiting distribution of $\ABS{z}_{(1)}$ we
  consider the following:
  \[
  \dP\PAR{\ABS{z}_{(1)}\leq t_0+\frac{g_n(u)}{\sqrt{n}}} %
  =\prod_{i=1}^n\dP\PAR{X_i \leq t_0+\frac{g_n(u)}{\sqrt{n}}},
  \]
  where $g_n(x)$ is growing to infinity with $n$ and to be
  determined. Consider first
  \[
  \prod_{i=i_0}^n\dP\PAR{X_i \leq t_0+\frac{g_n(u)}{\sqrt{n}}},
  \]
  for some $i_0$ that we choose as
  \[
  i_0:=n- c\sqrt{n\log(n)}.
  \]
  The value of the real $c>0$ will be fixed later.  One can first
  observe that $i_0\geq \epsilon n$ for some $\epsilon>0$. For each
  $i\geq i_0$ one has that
  \begin{align}
    \dP\PAR{X_i \leq t_0+\frac{g_n(u)}{\sqrt n}}
    &=\frac{1}{Z_i}\int_{-\infty}^{t_0+n^{-1/2}g_n(u)}\!e^{n F_i(t)}\,dt\nonumber\\
    &=\frac{(F_i''(t_i) n)^{1/2}}{\sqrt{2\pi }e^{nF_i(t_i)}(1+\cO(n^{-3/4}))}\int_{-\infty}^{t_0+n^{-1/2}g_n(u)}\!e^{n F_i(t)}\,dt\nonumber\\
    &=\frac{(F_i''(t_i) n)^{1/2}}{\sqrt{2\pi } (1+\cO(n^{-3/4}))}\int_{-\infty}^{t_i +t_0-t_i+n^{-1/2}g_n(u)}\!e^{n( F_i(t)-F_i(t_i))}\,dt\nonumber\\
    &=\frac{ (F_i''(t_i) )^{1/2}}{\sqrt{2\pi
      }(1+\cO(n^{-3/4}))}\int_{-\infty}^{\sqrt{n}(t_0-t_i)+g_n(u)}\!e^{n(
      F_i(t_i+\frac{s}{n^{1/2}})-F_i(t_i))}\,ds. 
    \label{eq:intfinal}
  \end{align}
  Hereabove we have used the asymptotic expansion of $Z_i$ using the Laplace
  method, and the fact that $i\geq \epsilon n$ for the error control, and in the
  last line we made the change of variables $t=t_i+s/\sqrt n$.
  
  Assume now that $g_n(u)\ll n^{1/2}.$ Because $t_i\leq t_0$, the
  integral in \eqref{eq:intfinal} is of order $\cO(1)$. We need to
  refine this rough order.  To that aim, we need to control the
  variation of $F_i$ (and the Gaussian decay) in a bounded
  neighborhood of $t_i$. Let $\delta>0$ be given (small), that we fix
  hereafter. Let us first examine the decay of $F_i$ ``far'' from
  $t_i$. Using the upper bound $F_i(t)-F_i(t_i)\leq -a(t-t_i)^2/2$
  forall $t$, one can see that only a neighborhood of $t_i$ of width
  $n^{-1/2}$ can contribute to the integral. We then turn to a
  neighborhood of $t_i$.
\[
\ABS{nF_i(t_i+s)-nF_i(t_i)-nF_i''(t_i) \frac{s^2}{2}} %
\leq n \sup_{|t-t_i|\leq \delta} |F_I'''(t)| \frac{s^3}{3!} %
\leq n \delta \sup_{|t-t_i|\leq \delta} |F_I'''(t)| \frac{s^2}{6}
\]
One can choose $\delta$ small enough so that for any $s, |s|<\delta$
\[
\ABS{nF_i(t_i+s)-nF_i(t_i)-nF_i''(t_i) \frac{s^2}{2}}\leq n F_{i}''(t_i)\frac{s^2}{4}.
\]
Thus using the change of variables $t:=t_i+s/n^{1/2}$ and by a straightforward
Taylor expansion, we deduce that for all $i\geq i_0$,
\begin{multline}
  \Big | \int_{|s|\leq \delta \sqrt n}e^{n(
    F_i(t_i+\frac{s}{n^{1/2}})-F_i(t_i))}ds
  -\int_{|s|\leq \delta \sqrt n}e^{-F_i''(t_i)^2s^2/2}ds\Big |\\
  \leq \sup_{i\geq i_0}\int_{|s|\leq \delta \sqrt n} n^{-1/2}\sup_{z\in B(t_0,
    \delta)}|F_{i}'''(z)| \frac{s^3}{6} e^{-C_0s^2} \leq
  C_1n^{-1/2}.\label{eq:esterrorini}
\end{multline}
The above estimate on the error $\cO(n^{-1/2})$ is not precise enough as,
later, one will sum this error over $n^{1/2}\sqrt{\log(n)} $ integers $i\geq
i_0$. We need to refine the above estimate.

\begin{lemma}[Exponential expansion]\label{le:base} 
  If $h: \R \to \R$ is $\cC^4$ function such that $h'(0)=0$ and $h''(0)<0$
  then there exists $\de>0$ such that for all $|t|\leq \delta$,
  \[
  e^{h(t)-h(0)}-e^{\frac{h''(0)t^2}{2}} %
  =\frac{h^{(3)}(0)}{6}t^3 e^{\frac{h''(0)t^2}{2}}+\cO(P(t))e^{\frac{h''(0)t^2}{4}},
  \]
  where $P(t)=C_1t^4+C_2t^6$ for some constants $C_1>0$ and $C_2>0$.
\end{lemma}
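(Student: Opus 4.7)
The plan is a two-step Taylor expansion: first expand $h$ to order four, then expand the exponential of the remaining cubic-and-higher part. There is no genuine obstacle --- the proof is essentially Taylor's theorem applied twice combined with the observation that $e^{h''(0)t^2/2}\leq e^{h''(0)t^2/4}$ (valid because $h''(0)<0$). The deliberate weakening of the Gaussian factor in the error term from exponent $1/2$ to $1/4$ is harmless slack that gives room to absorb additional polynomial-in-$t$ factors when this lemma is plugged into the proof of Lemma~\ref{le:genecase}. Throughout I fix $\delta>0$ small enough that the estimates below are valid on $[-\delta,\delta]$.

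First, since $h\in\cC^4$ and $h'(0)=0$, Taylor's theorem with fourth-order remainder gives
\[
h(t)-h(0) = \tfrac{1}{2}h''(0)\,t^2 + \tfrac{1}{6}h'''(0)\,t^3 + r(t),\qquad |r(t)|\leq M_4 t^4,
\]
where $M_4 := \tfrac{1}{24}\sup_{|s|\leq\delta}|h^{(4)}(s)|$. Factoring out the Gaussian,
\[
e^{h(t)-h(0)} - e^{h''(0)t^2/2} = e^{h''(0)t^2/2}\PAR{e^{S(t)}-1},\qquad S(t):=\tfrac{1}{6}h'''(0)\,t^3 + r(t).
\]
Next, apply the Lagrange form of the Taylor remainder to $x\mapsto e^x$ at $0$:
\[
e^{S(t)}-1 = S(t) + \tfrac{1}{2}\,S(t)^2\, e^{\theta(t)S(t)}\qquad\text{for some }\theta(t)\in(0,1).
\]
Substituting and peeling off the cubic contribution $\tfrac{1}{6}h'''(0)\,t^3$ out of $S(t)$ yields
\[
e^{h(t)-h(0)} - e^{h''(0)t^2/2} = \tfrac{1}{6}h'''(0)\,t^3\,e^{h''(0)t^2/2} + e^{h''(0)t^2/2}\PAR{r(t) + \tfrac{1}{2}S(t)^2\,e^{\theta(t)S(t)}}.
\]

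Finally, bound the error. On $[-\delta,\delta]$ with $\delta$ small, $|S(t)|\leq \tfrac{1}{6}|h'''(0)|\delta^3 + M_4\delta^4$ is uniformly bounded, so $e^{\theta(t)S(t)}$ is a bounded constant; moreover $S(t)^2\leq C(t^6+t^8)\leq C't^6$ for $|t|\leq\delta\leq 1$. Together with $|r(t)|\leq M_4 t^4$, the remainder term is bounded by $(C_1 t^4 + C_2 t^6)\,e^{h''(0)t^2/2}$ for appropriate constants $C_1,C_2>0$. Using $e^{h''(0)t^2/2}\leq e^{h''(0)t^2/4}$ (since $h''(0)<0$) one obtains $\cO(P(t))\,e^{h''(0)t^2/4}$ with $P(t) = C_1 t^4 + C_2 t^6$, which is exactly the stated identity.
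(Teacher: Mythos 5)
Your proof is correct and follows essentially the same route as the paper: expand $h$ at $0$, factor out $e^{h''(0)t^2/2}$, expand the exponential of the remaining cubic-plus-higher part to second order, and bound the error by $C_1t^4+C_2t^6$. The only cosmetic differences are that you use Lagrange-form Taylor remainders where the paper uses integral remainders, and you bound $e^{\theta(t)S(t)}$ by a constant on $[-\delta,\delta]$ so that the weakening of the Gaussian exponent from $1/2$ to $1/4$ is indeed pure slack, whereas the paper instead absorbs its analogous factor $e^{G(t)}$ into the Gaussian via $|G(t)|\leq C_3|t|^3\leq \tfrac14|h''(0)|t^2$, which is precisely where the $1/4$ originates there.
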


\begin{proof}[Proof of Lemma \ref{le:base}]
  First, one can write
  \[
  h(t)-h(0) %
  =\int_0^t\!\int_0^s\!h''(u)\,duds %
  =t^2\int_0^1\!s\,ds\int_0^1\!h''(sut)\,du.
  \]
  From this, one deduces that 
  \[
  e^{h(t)-h(0)}-e^{\frac{h''(0)t^2}{2}} %
  =e^{\frac{h''(0)t^2}{2}} %
  \PAR{e^{t^2\int_0^1\!s\,ds\int_0^1\!(h''(sut)-h''(0))\,du}-1}.
  \]
  Set now $G(t):=t^2\int_0^1sds\int_0^1 du (h''(sut)-h''(0)).$
  Then again 
  \[
  G(t)=t^3\int_0^1 s^2ds\int_0^1 udu \int_0^{1} h^{(3)}(sutx) dx.
  \]
  Using that 
  \[
  e^X-1=X+X^2\int_0^1 t dt \int_0^1 du e^{Xtu},
  \]
  one then deduces that  
  \begin{align*}
    \frac{e^{h(t)-h(0)}-e^{\frac{h''(0)t^2}{2}}}{e^{\frac{h''(0)t^2}{2}}}
    &=e^{G(t)}-1\\
    &=G(t)+G(t)^2\int_0^1\!vdv\int_0^1\!due^{G(t)vu}x\\
    &=\frac{t^3}{6}h^{(3)}(0)+t^4\int_0^1\!s^3ds\int_0^1\!u^2du
    \int_0^{1}xdx\int_0^1\!h^{(4)}(sutxv)\,dv\\
    &\quad+G(t)^2\int_0^1\!vdv \int_0^1\!due^{G(t) vu}.
  \end{align*}
  Thus we get that, every $\de>0$ and for all $t$ such that $|t|\leq \delta,$
  \[
  \ABS{e^{h(t)-h(0)}-e^{\frac{h''(0)t^2}{2}}-e^{\frac{h''(0)t^2}{2}}\frac{t^3}{6}
    h^{(3)}(0)}%
  \leq e^{\frac{h''(0)t^2}{2}}\PAR{t^4C_1\sup_{|t|\leq \delta}|h^{(4)}(t)| %
    +C_2 t^6 e^{G(t)\vee 0}}.
  \]
  As $|G(t)|\leq C_3t^3$ one then deduces that for $\delta$ small enough and
  any $|t|\leq \delta$,
  \[
  e^{\frac{h''(0)t^2}{2}+G(t)}\leq e^{\frac{h''(0)t^2}{4}}.
  \]
  This yields the desired estimate.
\end{proof}

Back to the proof of Lemma \ref{le:genecase}, thanks to Lemma
\ref{le:base}, \eqref{eq:esterrorini} can be improved to
\begin{multline*}
  \Big | \int_{|s|\leq \delta \sqrt n}e^{n(
    F_i(t_i+\frac{s}{n^{1/2}})-F_i(t_i))}ds %
  -\int_{|s|\leq \delta \sqrt n}e^{-F_i''(t_i)^2\frac{s^2}{2}}ds\\%
  -\int_{|s|\leq \delta \sqrt
    n}e^{-F_i''(t_i)^2\frac{s^2}{2}}F_i^{(3)}(t_i)\frac{s^3}{6n^{1/2}}ds\Big |
  \\\leq \int_{|s|\leq \delta \sqrt n}
  \frac{C_1s^4}{n}+\frac{C_2s^6}{n^2}e^{-C_0s^2} \leq C_3n^{-1}.
  \end{multline*}
  Note first that the above estimates hold uniformy in $i\geq i_0$
  provided that $i_0/n \to 1$ as $n$ goes to infinity. In particular
  the constants $C_1, C_2, C_3$ do not depend on $i\geq i_0.$ Note
  also that the same estimate holds if one multiplies the integrand by
  the indicator function $\mathbf{1}_{s\leq \sqrt n
    (t_0-t_i)+g_n(u)}.$ Let now $h_n <\de\sqrt{n}$ be such that
  $h_n\to\infty$ as $n$ grows to infinity. Then, using the fact that
  the third derivative $F_i'''(t_i)$ is bounded uniformly in $i$ when
  $n-i\leq c\sqrt{n\log(n)}$, we deduce that
  \[
    \int_{|t-t_i|\leq\de, s\leq h_n}\!
    e^{-F_i''(t_i)^2s^2/2}F_i^{(3)}(t_i)\frac{s^3}{6n^{1/2}}\,ds
    \leq Cn^{-1/2} h_n^2 e^{-h_n^2}.
  \]
  Indeed the integral can be explicitly computed. Lastly one has that there
  exists a constant $C_4$ such that for any $i\geq i_0$
  \[
  \ABS{\int_{t<t_i-\delta}\!e^{n( F_i(t_i+\frac{s}{n^{1/2}})-F_i(t_i))}\,ds}%
  \leq C_4e^{-na \delta^2/2} %
  \quad\text{and}\quad %
  \ABS{\int_{t<t_i-\delta}\!e^{nF_i''(t_i)s^2/2}\,ds}\leq
  C_4e^{-na\delta^2/2}.
  \]
  Combining the whole yields that up to an error term (of order
  $n^{-1/2}h_n^2e^{-h_n^2}$) one can replace the exponential term by
  the Gaussian one obtained by the Taylor expansion.

  Let $L$ be some large positive real number. Here we assume that
  $|u|\leq L$ and prove uniform convergence on compact sets of the
  c.d.f.: one has that
  \begin{multline}
    \log\prod_{i=i_0}^n\dP\PAR{X_i \leq t_0+\frac{g_n(u)}{\sqrt n}}\\
    =\sum_{i\geq
      i_0}\BRA{\log\SBRA{\int_{-\infty}^{g_n(u)+\sqrt{n}(t_0-t_i)}\frac{\sqrt{F_i''(t_i)}}{\sqrt{2\pi}}e^{-F''_i(t_i)
          s^2/2} ds+ \cO(n^{-1/2} h_n^2e^{-h_n^2/2})}}, \label{eq:majoerreur}
  \end{multline}
  where $h_n=\sup_{|u|\leq L}g_n(u).$ Then the sum over $i$ of the
  error term in \eqref{eq:majoerreur} can be bounded from above by
  $c\sqrt{\log(n)}h_n^2e^{-h_n^2/2}$.  This error term is negligible
  provided $h_n^2\gg 2{\log\log(n)}.$
  To sum up, we have
  \begin{align*}
    &\log \prod_{i=i_0}^n\dP\PAR{X_i \leq t_0+\frac{g_n(u)}{\sqrt n}}\\
    &=\sum_{i\geq i_0}\log
    \int_{-\infty}^{\sqrt{|F_i''(t_i)|}(g_n(u)+\sqrt{n}(t_0-t_i))}
    \frac{1}{\sqrt{2\pi}}e^{- s^2/2} ds+ %
    \cO((\log(n))^{1/2}h_n^2e^{-h_n^2/2})\\
    &=\sum_{i\geq
      i_0}\log\int_{-\infty}^{\sqrt{|F''(t_0)|}(g_n(u)+\sqrt{n}(t_0-t_i))+g_n(u)(\sqrt{|F_i''(t_i)|}-\sqrt{|F''(t_0)|})
    }\frac{1}{\sqrt{2\pi}}e^{-s^2/2} ds\\
    &\quad +\cO((\log(n))^{1/2}h_n^2e^{-h_n^2/2})
  \end{align*}
  where we have noted $F''(t_0):=-2/t_0^2-V''(t_0).$ Let us now choose
  \[
  g_n(u):=\frac{f_n(u)}{\sqrt{|F''(t_0)|^{3/2}t_0/2}} %
  \quad\text{where}\quad %
  f_n(u)= \sqrt{2\log\PAR{\frac{e^{u}\sqrt{n/(2\pi)}}{\log(n)}}}.
  \]
  Then $f_n\to \infty$ provided $u$ is bounded from below and one can
  easily check that with such a choice of $f$ all the required
  estimates on $h_n$ hold true. Also there exists a constant $C(t_0)$
  such that for any $i\geq i_0$
  \[
  F_i''(t_i)-F''(t_0)= \frac{n-i}{n} (C_0+o(1)).
  \]
  The constant $C_0$ can be expressed in terms of $t_0$ and the
  derivatives of $V$: it may be zero. The important fact is that
  \[
  \sqrt{|F''(t_0)|}\sqrt{n}(t_0-t_i))+g_n(u)(\sqrt{|F_i''(t_i)|}-\sqrt{|F''(t_0)|})%
  =\frac{n-i}{\sqrt{n}}\PAR{\frac{2}{t_0\sqrt{|F''(t_0)|}}+o(1)}.
  \]
  Using now the same arguments as in \cite{MR1986426} for the Riemann
  sum approximation, we again deduce that uniformly on compact
  subsets,
  \[
  \log\prod_{i=i_0}^n\dP\PAR{X_i\leq t_0+\frac{g_n(u)}{\sqrt n}}=-e^{-u}.
  \]
  Now there remains to consider the smaller integers
  $i$. 
  Assume now that $i<i_0$. The exact critical point $t_i$ depends on
  $n$ and might now be closer and closer to $t_2$ where
  $V'(t_2)=0$. We need to control the second derivative of $F_i$ to be
  able to undertake the same Laplace analysis as above.
  \[
  F_i''(t)=-V''(t)-\frac{2i-1}{nt^2}.
  \]
  By the convexity assumption, we deduce that $t_i>t_2$ so that the
  second derivative is negative and bounded from above in the whole
  interval $[t_2/2, 2t_0].$ From that one can deduce that
  \[
  n \left ( F_i(t)-F_i(t_i) \right ) %
  =n \int_{t_i}^t\!F_i'(s)\,ds %
  =n \int_{t_i}^t\!\int_{t_i}^s\!F_i''(u)\,duds %
  \leq -n a (t-t_i)^2,
  \]
  by the strict convexity of $V$. In particular for all $i\leq i_0$
  one has that $t_i\leq t(i_0)$ with
  \[
  |t(i_0)-t_0| \geq C_5 \sqrt{\frac{\log(n)}{n}},
  \]
  for some constant $C_5>0$. Thus we deduce that
  \begin{align*}
    \prod_{i=0}^{i_0}\dP\PAR{X_i \leq t_0+\frac{g_n(u)}{\sqrt n}}
    &=\prod_{i=0}^{i_0}\PAR{1-\dP\PAR{X_i \geq t_0+\frac{g_n(u)}{\sqrt(n)}}}\\
    &=\prod_{i=0}^{i_0} \SBRA{1- \frac{\int_{t_0+\frac{g_n(u)}{\sqrt n}}^{\infty}e^{n F_i(t)}dt }{\int_{\R^+}e^{n F_i(t)}dt}}\\
    &\geq \prod_{i=0}^{i_0} \SBRA{1- (\sqrt{n}(1/2+o(1))\int_{t_0+\frac{g_n(u)}{\sqrt n}}^{\infty}e^{n F_i(t)-n F_i(t_i)}dt}\\
    &\geq \prod_{i=0}^{i_0} \SBRA{1-(\sqrt{n}(1/2+o(1))e^{-n a (t_0+\frac{g_n(u)}{\sqrt n}-t_i)^2}}\\
    & \geq \prod_{i=0}^{i_0} \SBRA{1-(\sqrt{n}(1/2+o(1))e^{-aC_5c^2\log(n)}}.
  \end{align*}
  Let us now choose $c$ (determining by this way $i_0$) so that $aC_5 c^2>2.$
  The latter product goes to $1$ as $n $ goes to infinity. This is the needed
  estimate.
\end{proof}

 %
 %
 %
 %


\providecommand{\bysame}{\leavevmode\hbox to3em{\hrulefill}\thinspace}
\providecommand{\MR}{\relax\ifhmode\unskip\space\fi MR }
\providecommand{\MRhref}[2]{%
  \href{http://www.ams.org/mathscinet-getitem?mr=#1}{#2}
}
\providecommand{\href}[2]{#2}

\end{document}